\theoremstyle{plain}
\newtheorem*{rost}{Rost Nilpotence Principle}
\newtheorem{theorem}{Theorem}[section]
\newtheorem{lemma}[theorem]{Lemma}
\theoremstyle{definition}
\newtheorem{remark}[theorem]{Remark}
\numberwithin{equation}{section}
\newcommand{\SmProj}{{\rm SmProj}}
\newcommand{\Corr}{{\rm Corr}}
\newcommand{\Chow}{{\rm Chow}}
\newcommand{\CH}{{\rm CH}}
\newcommand{\res}{{\rm res}}
\newcommand{\Dim}{{\rm dim \ }}
\newcommand{\Hom}{{\rm Hom}}
\newcommand{\End}{{\rm End}}
\newcommand{\Spec}{{\rm Spec \,}}
\newcommand{\Char}{{\rm char \,}}
\newcommand{\Gal}{{\rm Gal}}
\newcommand{\sF}{{\mathcal F}}
\newcommand{\sH}{{\mathcal H}}
\newcommand{\sK}{{\mathcal K}}
\newcommand{\G}{{\mathbb G}}
\newcommand{\HH}{{\mathbb H}}
\renewcommand{\P}{{\mathbb P}}
\newcommand{\Q}{{\mathbb Q}}
\newcommand{\Z}{{\mathbb Z}}
\def\<{\langle}
\def\>{\rangle} 
\def\-{\overline} 
\def\~{\widetilde}
\def\^{\widehat}
\mathchardef\mhyphen="2D
\begin{document}

\title{Rost nilpotence and \'etale motivic cohomology}

\author{Andreas Rosenschon}
\address{Mathematisches Institut, Ludwig-Maximilians Universit\"at, Theresienstr. 39, 
D-80333 M\"unchen, Germany.}
\email{axr@math.lmu.de}

\author{Anand Sawant}
\address{Mathematisches Institut, Ludwig-Maximilians Universit\"at, Theresienstr. 39, 
D-80333 M\"unchen, Germany.}
\email{sawant@math.lmu.de}
\date{\today}

\subjclass[2010]{14C15, 14C25, 19E15 (Primary)}
\keywords{algebraic cycles; motivic cohomology; Rost nilpotence}

\begin{abstract}
A smooth projective scheme $X$ over a field $k$ is said to satisfy the Rost nilpotence principle if 
any endomorphism of $X$ in the category of Chow motives that vanishes on an extension of the base 
field $k$ is nilpotent.  We show that an \'etale motivic analogue of the Rost nilpotence principle 
holds for all smooth projective schemes over a perfect field.  This provides a new
approach to the question of Rost nilpotence and allows us to obtain an elegant proof of Rost nilpotence for surfaces, as well as for birationally ruled threefolds over a field of characteristic $0$.
\end{abstract}

\date{}
\maketitle

\section{Introduction}
\label{section introduction}

Let $X$ be a smooth projective scheme over a field $k$, and let $\Chow(k)$ denote the category of Chow motives over $k$, see \cite{Manin}, \cite{Scholl}, for instance.  We say that the \emph{Rost nilpotence principle} holds for $X$ if for any field extension $E/k$, the kernel of the homomorphism $\End_{\Chow(k)}(X) \to \End_{\Chow(E)}(X_E)$ consists of nilpotent elements. This was first proved by Rost for any smooth projective quadric over a field \cite{Rost-lemma}; 
it follows from this result that there is a decomposition of the Chow motive of a quadric into simpler motives, which is an essential tool in Voevodsky's proof of the Milnor conjecture \cite{Voevodsky-Milnor-conjecture}.  Chernousov, Gille and Merkurjev \cite{Chernousov-Gille-Merkurjev} proved that the Rost nilpotence principle holds for isotropic projective homogeneous varieties for a semisimple algebraic group.  Later, Gille showed that the Rost nilpotence principle holds for geometrically rational surfaces (in arbitrary characteristic) \cite{Gille-geometrically-rational-surfaces} \cite{Gille-Surfaces}, and for smooth, projective, geometrically integral surfaces (in characteristic 0) \cite{Gille-Surfaces}. The Rost nilpotence principle has proved to be very useful in the study of motivic decompositions and is expected to hold for all smooth projective schemes. 

In order to prove that Rost nilpotence holds for a smooth projective scheme $X$ over a field $k$ of characteristic $0$, it suffices to show that for a finite Galois field extension $E/k$ the kernel of the restriction map $\End_{\Chow(k)}(X) \to \End_{\Chow(E)}(X_E)$ consists of nilpotent elements. The approach by Gille to prove this statement for surfaces uses two nontrivial results.  The first input is a result of Rost \cite[Proposition 1]{Rost-lemma}, originally proved using Rost's fibration spectral sequence for cycle modules (see also \cite{Brosnan} for a purely intersection-theoretic proof).  The second input is a Galois cohomological description of codimension $2$ cycles on a scheme that are annihilated after base change to $E$, obtained by Colliot-Th\'el\`ene and Raskind \cite[Proposition 3.6]{Colliot-Thelene-Raskind}, building on ideas of Bloch used in the study of codimension $2$ cycles on rational surfaces \cite{Bloch rational surfaces}.  More precisely, the essential tool is the following vanishing result of Colliot-Th\'el\`ene \cite[Theorem 1, Remark 5.2]{Colliot-Thelene} and Suslin \cite[Theorem 5.8]{Suslin}: $H^1(\Gal(E/k),K_2E(X)/K_2E)=0$, where $X$ is a geometrically irreducible variety with a $k$-rational point. We remark that the analogue of this vanishing result for higher Galois cohomology groups does not hold, thus Gille's proof does not generalize to higher dimensions. 

We consider the \'etale motivic or Lichtenbaum cohomology groups 
 defined as the hypercohomology groups of Bloch's cycles complex $\Z(n)_{\text{\'et}}$, considered as a complex of \'etale sheaves; for details, see Section \ref{section etale motivic cohomology}. A correspondence $\alpha$ in $\End_{\Chow(k)}(X)$ acts on the Lichtenbaum cohomology groups of $X \times_k X$, after inverting the exponential characteristic of the base field. Thus an evident \'etale motivic analogue of the Rost nilpotence principle is to ask whether this action in the \'etale setting is nilpotent. We show that Rost nilpotence holds in this setting in arbitrary dimension in the following sense: 

\begin{theorem}
\label{maintheorem}
Let $X$ be a smooth projective scheme over a perfect field $k$.  Let $\alpha \in \End_{\Chow(k)}(X)$ be a correspondence such that for a Galois field extension $E$ of $k$ the image $\alpha_E\in \End_{\Chow(E)}(X_E)$ is trivial. Then the action of $\alpha$ on the Lichtenbaum cohomology groups of $X \times_k X$ is nilpotent, after inverting the exponential characteristic. 
\end{theorem}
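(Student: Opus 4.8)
The plan is to use the one feature distinguishing Lichtenbaum cohomology from ordinary Chow groups---Galois descent---and turn it into nilpotence by a multiplicative filtration argument.

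First I would reduce to a finite extension: writing $\CH^d_L(X_E\times X_E)=\varinjlim_{E'/k}\CH^d_L(X_{E'}\times X_{E'})$ over finite subextensions, the hypothesis $\alpha_E=0$ already holds over some finite Galois $E'/k$, so I may take $G=\Gal(E/k)$ finite of order $N$, with $N$ prime to the exponential characteristic $p$ since $k$ is perfect; all coefficients are $\Z[1/p]$. Viewing $\alpha$ as its class in $\CH^d_L(X\times X)=H^{2d}_L(X\times X,\Z(d))$, and using that Lichtenbaum cohomology is \'etale hypercohomology of $\Z(d)$ while $X_E\times X_E\to X\times X$ is a finite \'etale $G$-cover, I obtain a Hochschild--Serre descent spectral sequence
\begin{equation*}
E_2^{p,q}=H^p\bigl(G,\,H^q_L(X_E\times X_E,\Z(d))\bigr)\ \Longrightarrow\ H^{p+q}_L(X\times X,\Z(d))
\end{equation*}
with descent filtration $F^\bullet$. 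The transfer relation $\mathrm{cor}\circ\mathrm{res}=N$ together with $\mathrm{res}(\alpha)=\alpha_E=0$ gives $N\alpha=0$, so $\alpha$ is $N$-torsion.

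Next I would place $\alpha$ in the filtration and propagate it under composition. The edge map $H^{2d}_L(X\times X,\Z(d))\to E_2^{0,2d}=H^{2d}_L(X_E\times X_E,\Z(d))^G$ is exactly $\mathrm{res}$, so $\alpha\in F^1$. Composition of correspondences is $\beta\circ\gamma=(p_{13})_*\bigl(p_{23}^*\beta\cup p_{12}^*\gamma\bigr)$ on $X\times X\times X$, and each ingredient respects descent: pullbacks preserve $F^\bullet$, the cup product satisfies $F^a\cup F^b\subseteq F^{a+b}$, and the Gysin pushforward preserves the filtration degree, since it acts only on the geometric index $q$. Hence $F^a\circ F^b\subseteq F^{a+b}$ and, inductively, $\alpha^{\circ n}\in F^n$. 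To force termination I pass to finite coefficients: $\Z/N^k(d)_{\text{\'et}}\simeq\mu_{N^k}^{\otimes d}$ and ordinary \'etale cohomology of a sheaf vanishes in negative degrees, so the mod-$N^k$ descent filtration on $H^{2d}_{\text{\'et}}(X\times X,\mu_{N^k}^{\otimes d})$ has length at most $2d+1$. Running the identical argument mod $N^k$ gives $\alpha^{\circ(2d+1)}\equiv 0$ for every $k$, whence the integral $N$-torsion class $\alpha^{\circ(2d+1)}$ lies in $\bigcap_k N^k\,H^{2d}_L(X\times X,\Z(d))$, the divisible part of the torsion. On its $\ell$-divisible part the correspondence acts through a $\Z_\ell$-linear endomorphism of the Tate module $T_\ell$, and the same mod-$\ell^k$ computation forces that endomorphism to be nilpotent; since $\alpha^{\circ(2d+1)}$ already maps into this divisible part, a bounded number of further compositions annihilates it, giving $\alpha^{\circ M}=0$ and hence the asserted nilpotence of the action on $H^*_L(X\times X)$.

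The hard part will be the passage from finite coefficients back to an integral identity---equivalently, ensuring that the descent filtration actually terminates. Unconditional negative-degree vanishing of integral Lichtenbaum cohomology is essentially Beilinson--Soul\'e and is open; the finite-coefficient reduction disposes of the torsion-free part, while the Tate-module argument disposes of the divisible torsion provided that divisible part has finite corank. Securing this boundedness---so that a nilpotent operator on $T_\ell$ genuinely kills the divisible classes after finitely many compositions---is the crux I would need to nail down, and I expect it to rest on a finiteness property of the Lichtenbaum cohomology of $X\times X$ rather than on any new input about cycles.
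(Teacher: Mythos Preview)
Your setup is right—the Hochschild--Serre spectral sequence for the $G$-cover $X_E\times X_E\to X\times X$ and the resulting descent filtration are exactly what the paper uses—but you miss the one observation that makes the argument work, and your workaround is both unnecessary and incomplete.

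The issue you identify is real: a priori the filtration need not terminate, because $H^s_{\rm L}(Y_E,\Z(n))$ can be nonzero for arbitrarily negative $s$, and Beilinson--Soul\'e is indeed open. But you do not need these groups to vanish; you only need their \emph{higher group cohomology} to vanish. The paper notes that for $s<0$ the long exact sequence attached to $\Z(n)\to\Q(n)\to(\Q/\Z)(n)$ identifies $H^s_{\rm L}(Y_E,\Z(n))\cong H^s_{\rm L}(Y_E,\Q(n))$, since $H^s_{\rm L}(Y_E,\Q/\Z(n))=H^s_{\text{\'et}}(Y_E,(\Q/\Z)(n))=0$ for $s<0$. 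Thus every $H^s_{\rm L}$ in the troublesome range is a $\Q$-vector space, and $H^r(G,-)$ of a $\Q$-vector space vanishes for $r>0$ when $G$ is finite. So $E_2^{r,s}=0$ whenever $r>0$ and $s<0$, the filtration on $H^m_{\rm L}(Y,\Z(n))$ has length at most $m+1$, and since $\alpha_E=0$ kills every $E_2$-term, the action of $\alpha$ is nilpotent. No finite-coefficient detour, no Tate-module argument, no corank finiteness is needed; your ``hard part'' dissolves. In positive characteristic the same argument runs after tensoring with $\Z[1/p]$, using $(\Q/\Z)'(n)=\bigoplus_{\ell\neq p}\Q_\ell/\Z_\ell(n)$.

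Two smaller points. First, your claim that $N=|G|$ is prime to $p$ because $k$ is perfect is false: Artin--Schreier extensions already give degree-$p$ Galois extensions of any field of characteristic $p$. The paper does not use this; it simply works with $\Z[1/p]$-coefficients throughout. Second, your alternative endgame is genuinely incomplete as you yourself acknowledge: passing to $T_\ell$ and arguing via nilpotence there requires controlling the divisible torsion in $H^{2d}_{\rm L}$, and you have not supplied that control. Once you insert the $\Q$-vector-space observation above, none of this is needed.
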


We remark that analogous to the category $\Chow(k)$ of Chow motives over a field $k$, one can construct a category $\Chow_{\rm L}(k)$ of \'etale motivic or Lichtenbaum Chow motives.  In particular, the proof of Theorem \ref{maintheorem} shows that the analogue of Rost nilpotence in the \'etale motivic or Lichtenbaum setting holds for schemes of arbitrary dimension, provided the underlying field $k$ is perfect.

There is canonical map from Chow groups to Lichtenbaum Chow groups, which allows us to compare Rost nilpotence in the usual sense with the \'etale motivic variant proved in Theorem \ref{maintheorem}.  Using the Bloch-Kato conjecture, proved by Rost-Voevodsky \cite{Voevodsky-Milnor-conjecture}, \cite{Voevodsky-Bloch-Kato}, the kernel of this comparison map can be identified with the quotient of a group, which can be computed via a spectral sequence in terms of cohomology groups of certain well-studied sheaves. We analyze the action of a correspondence which is annihilated by a Galois field extension of the base field on the kernel of the comparison map.  In case of dimension $\leq 2$ over a field of characteristic $0$, our approach yields an elegant proof of the Rost nilpotence principle, see Theorem \ref{theorem Rost nilpotence dim at most 2}.  We note that even for surfaces this generalizes the result of Gille \cite{Gille-Surfaces}, since we do not have to impose the condition of geometric integrality.  Moreover, we can improve the bound on the nilpotence exponent 
obtained by Gille, see Remark \ref{remark Gille comparison}.  We also show that the Rost nilpotence principle holds for birationally ruled threefolds over a field of characteristic $0$.  We note that in this case the restriction on the characteristic is also needed because of use of the weak factorization theorem \cite{AKMW} in the proof.

\subsection*{Acknowledgements}
We are grateful to Najmuddin Fakhruddin for pointing out Lemma \ref{lemma threefolds birational invariant} and the use of the weak factorization theorem.  
We also thank Fr\'ed\'eric D\'eglise for discussions and Alexander Merkujev and Burt Totaro for comments and remarks on earlier versions of this paper.  Finally, we thank the referee(s) for a careful reading of the paper and helpful comments that lead to an improved exposition.  This research was partially funded by the DFG. 

\subsection*{Notation.}

Let $X$ be a scheme over a field $k$; we assume $X$ to be separated, of finite type and 
equidimensional.  By a surface we mean a scheme of dimension $2$ and by a threefold we mean a scheme of dimension $3$.  We write $X^{(i)}$ for the points of codimension $i$, and $\CH_i(X)$ 
(resp. $\CH^i(X)$) for the Chow group of algebraic cycles of dimension $i$ (resp.~codimension 
$i$) on $X$ modulo rational equivalence \cite{Fulton}. In particular, if the $k$-scheme $X$ 
is of dimension $d$ over $k$, we have $\CH_i(X) = \CH^{d-i}(X)$.

If $E/k$ is a field extension, we set $X_E=X \times_{\Spec k} \Spec E$; if $\-k$ is an algebraic 
closure of $k$, then $\-X=X\times_{\Spec k}\Spec{\-k}$. If $f:X \to Y$ is a morphism of 
schemes over $k$, its pullback along $\Spec E \to \Spec k$ will be denoted by 
$f_E: X_E \to Y_E$.  For an integral $k$-scheme $X$, we write $k(X)$ for its 
function field and $E(X)$ for the function field of $X_E$. If $x$ is a point of $X$, then 
$k(x)$ will denote its residue field.  

\section{Correspondences, Chow motives and Rost nilpotence}
\label{section correspondences}

In this section, we set up the notation for the article and give a precise statement of the Rost nilpotence principle.  For more details on the basic properties of Chow motives and their relationship with motivic cohomology, we refer the reader to \cite{Manin},\cite{Scholl}, \cite[Chapter 5]{Voevodsky-Suslin-Friedlander}.  For cycle modules and their basic properties, we refer the reader to \cite{Rost} (see also \cite{Elman-Karpenko-Merkurjev}).

Let $\SmProj/k$ be the category of smooth projective schemes over a field $k$. The category $\Corr^0(k)$ of correspondences of degree $0$ over $k$ has the same objects as $\SmProj/k$ and as morphisms
\[
{\Hom}_{\Corr^0(k)}(X,Y)= \underset{j=1}{\overset{r}{\oplus}} \CH^{\Dim Y} (X_j\times Y),
\]
\noindent where $X_1, \ldots, X_r$ are the irreducible components of $X$.  If $f \in {\Corr^0_k}(X,Y)$ and $g \in {\Corr^0_k}(Y,Z)$, then their composition $g \circ f \in {\Corr^{0}_k}(X,Z)$ is defined by the formula
\[
g \circ f = {p_{XZ}}_*(p_{XY}^*(f) \cdot p_{YZ}^*(g)), 
\]
\noindent where $p_{XY}$, $p_{YZ}$ and $p_{XZ}$ are the projection maps from $X \times_k Y \times_k Z$ to $X \times_k Y$, $Y \times_k Z$ and $X \times_k Z$, respectively, and $\cdot$ is 
the intersection product.   The category $\Chow^{\rm eff}(k)$ of effective Chow motives is the idempotent completion of $\Corr^0(k)$.  We will denote the category of Chow motives by $\Chow(k)$ and by $h: \SmProj/k \to \Chow(k)$ the canonical functor that associates with a smooth scheme its Chow motive.  For an object $M$ of $\Chow(k)$ and $i \in \mathbb Z$, we will denote by $M(i)$ its $i$th Tate twist.  To simpilfy notation, we will henceforth write 
\[
\begin{array}{rcl}
\Hom_k(X,Y) & = & \Hom_{\Chow(k)}(h(X),h(Y)),
\vspace{0.1cm}
\\ 
\End_k(X) & = & \End_{\Chow(k)}(h(X)).
\end{array}
\]

We will be interested in the action of $\End_k(X)$ (correspondences of degree $0$ from $X$ 
to itself) on the Chow groups of the self-product $X \times_k X$; this action is simply given 
by composition of correspondences
\begin{equation}
\label{equation action of End}
\End_k(X) \times \CH^n(X \times_k X) \to \CH^n(X \times_k X); \ (\alpha, \beta) \mapsto \alpha \circ \beta.
\end{equation}

Let $E/k$ be a field extension. Then $X\mapsto X_E$ induces a restriction functor
\[
\res_{E/k}: \Chow(k) \to \Chow(E).  
\]
If $M$ is an object (resp. $f$ is a morphism) in $\Chow(k)$, we write $M_{E}$ (resp. $f_E$) for its 
image under the restriction map $\res_{E/k}(M)$.  With this setup, we can state the Rost Nilpotence Principle:  

\begin{rost}
Let $X$ be a smooth projective scheme over a field $k$. Then the Rost nilpotence principle holds 
for $X$ if for every $\alpha \in \End_k(X)$ such that $\alpha_E=0$ for some field extension $E/k$, 
there exists an integer $N $ (possibly depending on $\alpha$) such that $\alpha^{\circ N} =0$,
i.e. $\alpha$ is nilpotent as a correspondence.
\end{rost}

\section{\'Etale motivic cohomology and actions of correspondences}
\label{section etale motivic cohomology}

In this section, we recall the definition and basic properties of \'etale motivic or Lichtenbaum cohomology and give the proof of Theorem \ref{maintheorem}.  Throughout this section, we will denote by $p$ the exponential characteristic of $k$.

Let $X$ be a smooth scheme over $k$ and let $z^n(X,\bullet)$ be the cycle complex 
defined by Bloch \cite{Bloch Higher Chow groups} whose homology groups define the higher
Chow groups
\[
\CH^n(X, m) = H_m(z^n(X,\bullet)). 
\]
\noindent The presheaf $z^n(-,\bullet): U\mapsto z^n(U,\bullet)$ is a sheaf on the 
(small) \'etale site (see \cite[Section 2.2]{Geisser-Levine-Crelle}), and therefore defines a complex of sheaves in the \'etale and Zariski topology.  It is shown in \cite{Bloch Higher Chow groups} that the cycle complex is covariantly functorial for proper maps and contravariantly functorial for arbitrary maps of smooth schemes over a field (the latter assertion requires a moving lemma, which is proved in \cite{Bloch-Moving}).

If $A$ is an abelian group, we have the complex 
$A_X(n)=(z^n(-,\bullet) \otimes A)[-2n]$ of Zariski sheaves (on $X_{\text{Zar}}$) and the 
analogous complex of $A_X(n)_{\text{\'et}}$ of \'etale sheaves (on $X_{\text{\'et}}$). 
The motivic and \'etale motivic or Lichtenbaum cohomology groups with coefficients in $A$ 
are defined as the hypercohomology groups of these complexes
\[
\begin{split}
H_{\rm M}^{m}(X, A(n)) & = \mathbb H^m_{\text{Zar}}(X, A_X(n)), \\
H_{\rm L}^{m}(X, A(n)) & = \mathbb H^m_{\text{\'et}}(X, A_X(n)_{\text{\'et}}). 
\end{split}
\]
With this definition, one has $H_{\rm M}^{m}(X, \Z(n))=\CH^n(X, 2n-m)$ for all $m,n$; 
in particular, if $m=2n$, then $H^{2n}_{\rm M}(X,\Z(n))=\CH^n(X)$ is the usual Chow group.

Analogously, one defines for $m=2n$ the Lichtenbaum Chow groups by
\[
\CH_{\rm L}^n(X)= H^{2n}_{\rm L}(X,\Z(n)), 
\]
\noindent and more generally for $m\geq 0$, the higher Lichtenbaum Chow groups by
\[
\CH_{\rm L}^n(X,m)= H^{2n-m}_{\rm L}(X,\Z(n)). 
\]
Note that $\CH_{\rm L}^n(X,m) = 0$ for $n<0$, because $\Z(n)_{\text{\'et}}$ 
is trivial for $n<0$.  If $\pi: X_{\text{\'et}} \to X_{\rm Zar}$ denotes the canonical morphism of 
sites, then the associated adjunction $\Z_X(n) \to R\pi_* \pi^* \Z_X(n) = R \pi_* \Z_X(n)_{\text{\'et}}$ 
induces comparison (or cycle class) maps
\begin{equation}
\label{equation comparison map}
\CH^n(X, m) \xrightarrow{\gamma} \CH^n_{\rm L}(X, m), 
\end{equation}
for all $m, n$.  With rational coefficients, the adjunction $\Q_X(n) \to R\pi_* \pi^* \Q_X(n)$ is an 
isomorphism (see \cite[Th\'eor\`eme 2.6]{Kahn}, for example). Thus, rationally, we have 
\begin{equation}
\label{equation rational identification}
\CH^n(X, m) \otimes \Q \cong \CH^n_{\rm L}(X, m) \otimes \Q,
\end{equation}
for all $m,n$.  Geisser-Levine have shown in \cite[Theorem 8.5]{Geisser-Levine-Invent} and 
\cite[Theorem 1.5]{Geisser-Levine-Crelle} that if $\ell$ is a prime and $r$ is a positive integer, 
one has on $X_{\text{\'et}}$ the quasi-isomorphisms
\[
(\Z/ \ell^r \Z)_X(n) \xrightarrow{\sim}
\begin{cases}
\mu_{\ell^r}^{\otimes n}, \quad \quad \quad \quad \text{ if $\ell \neq \Char(k)$;} \\
\nu_r(n)[-n],~ ~ \quad \text{ if $\ell = \Char(k)$,}
\end{cases}
\]
\noindent where $\nu_r(n)$ is the $n$-th logarithmic de Rham-Witt sheaf \cite{Milne logarithmic}, \cite{Illusie}. 
Let
\[
(\Q/\Z)_X(n) = \underset{\ell}{\oplus}~ \Q_{\ell}/\Z_{\ell}(n), 
\]
\noindent where $\ell$ runs through all primes, and where
\[
\Q_{\ell}/\Z_{\ell}(n)=
\begin{cases}
\underset{r}{\varinjlim}~ \mu_{\ell^r}^{\otimes n}, \quad \quad \quad \quad \text{ if $\ell \neq \Char(k)$;} \\
\underset{r}{\varinjlim}~ \nu_r(n)[-n],~ ~ \quad \text{ if $\ell = \Char(k)$.}
\end{cases}
\]
\noindent Therefore, with divisible coefficients Lichtenbaum and \'etale cohomology coincide
\begin{equation}
\label{equation identification with etale cohomology}
H_{\rm L}^{m}(X, \Q/\Z(n)) = H^m_{\text{\'et}}(X, (\Q/\Z)_X(n)).
\end{equation}

There are product maps on motivic cohomology (which are induced from the usual external product of cycles at the level of cycle complexes followed by pullback along the diagonal) 
\[
H^m_{\rm M}(X,R(n))\otimes H^{m'}_{\rm M}(X,R(n'))\rightarrow H^{m+m'}_{\rm M}(X,R(n+n')),
\]
and similar product maps for the Lichtenbaum cohomology groups with coefficients in any commutative ring $R$.  Both motivic and Lichtenbaum cohomology groups are contravariantly functorial for arbitrary morphisms between smooth schemes.  In order to get an action of correspondences on Lichtenbaum cohomology groups by a formula analogous to \eqref{equation action of End}, we need appropriate covariant functoriality of Lichtenbaum cohomology groups, which we briefly describe below, using comparison with extension groups in the triangulated category ${\rm DM}_{\text{\'et}}(k, R)$ of \'etale motives (see \cite[Chapter 5]{Voevodsky-Suslin-Friedlander} or \cite{Cisinski-Deglise}).  We will use the notation and terminology of \cite{Cisinski-Deglise}.  It has been shown in \cite[Section 7.1]{Cisinski-Deglise} that there is a canonical map
\[
\rho^{m,n}_X: H^m_{\rm L}(X, \Z(n)) \to H^m_{\text{\'et}}(X, \Z(n)),
\]
where $H^m_{\text{\'et}}(X, \Z(n))$ is defined to be the group $\Hom_{{\rm DM}_{\text{\'et}}(k, R)} (\Z(X),\Z(n)[m])$.  By \cite[Theorem 7.1.2]{Cisinski-Deglise}, $\rho^{m,n}_X$ becomes an isomorphism after tensoring with $\Z[1/p]$.  By \cite[Corollary 6.2.4]{Deglise-RR}, any projective morphism $f: X \to Y$ of relative dimension $r$ between smooth schemes induces Gysin/pushforward morphisms 
\[
f_*: H^m_{\text{\'et}}(X, R(n)) \to H^{m-2r}_{\text{\'et}}(Y, R(n-r))
\]
satisfying the projection formula.  Moreover, the cycle class map 
\[
\sigma: H^m_{\rm M}(X, R(n)) \xrightarrow{\gamma} H^m_{\rm L}(X, R(n)) \xrightarrow{\rho^{m,n}_X} H^{m}_{\text{\'et}}(X, R(n)) 
\]
is compatible with pushforwards with respect to projective maps between regular schemes, where on the left-hand
side one considers the usual pushforwards on Chow groups and on the right-hand side the Gysin morphisms of \cite{Deglise-RR} (see \cite[Remark 7.1.12]{Cisinski-Deglise}).  One therefore gets an action of $\End_k(X)$ by the formula analogous to \eqref{equation action of End} on the groups $H^m_{\text{\'et}}(X \times_k X, R(n))$: 

\begin{equation}
\label{End-action}
\begin{split}
\End_k(X) \times H^{m}_{\text{\'et}}(X \times_k X, R(n)) &\to H^{m}_{\text{\'et}}(X \times_k X, R(n)); \\
(\alpha, \beta) &\mapsto {p_{13}}_*(p_{12}^*\sigma(\alpha) \cdot p_{23}^*\beta),  
\end{split}
\end{equation}
where $p_{ij}$ denotes the projection map $X \times_k X \times_k X \to X \times_k X$ on the $i,j$th components.  Consequently, one gets an action of $\End_k(X)$ on the Lichtenbaum cohomology groups $H^m_{\rm L}(X \times_k X, R(n))$ after inverting the exponential characteristic (that is, after tensoring with $\Z[1/p]$).

\begin{remark}
In fact, with the terminology of \cite{Cisinski-Deglise}, the category ${\rm DM}_{\text{\'et}}(k, R)$ satifies the Grothendieck six functor formalism along with absolute purity and duality properties (see \cite[Corollary 5.5.5, Theorem 5.6.2 and Theorem 6.2.17]{Cisinski-Deglise}). 
\end{remark}

The main ingredient in the proof of Theorem \ref{maintheorem} is the Hochschild-Serre spectral sequence 
\begin{equation}
\label{equation motivic Hochschild-Serre}
E_2^{r,s} = H^r (G, H^{s}_{\rm L} (Y_E, \Z(n))) \Rightarrow H^{r+s}_{\rm L} (Y, \Z(n))
\end{equation}
for \'etale motivic cohomology (see, for example, \cite[page 31]{CT-Kahn}), where $G$ denotes the Galois group of the Galois field extension $E/k$.  For the convenience of the reader, we briefly recall its construction and justify the convergence.  Let ${\rm Shv}(Y_{\text{\'et}})$ denote the category of \'etale sheaves on $Y$ and let $G\mhyphen{\rm Mod}$ denote the category of the category of $\Z[G]$-modules.  Given a cochain complex $C^{\bullet}$ of \'etale sheaves on $Y$, one has the hypercohomology spectral sequence \cite[5.7.9]{Weibel-Hbook} associated with the functor ${\rm Shv}(Y_{\text{\'et}}) \to G\mhyphen{\rm Mod}$ defined by $\sF \mapsto \sF(Y_E)^G$:
\[
E_2^{r,s} = H^r (G, \HH^{s}_{\text{\'et}} C^{\bullet}(Y_E)) \Rightarrow \HH^{r+s}_{\text{\'et}} C^{\bullet}(Y),
\]
which converges if the complex $C^{\bullet}$ is bounded or cohomologically bounded (see \cite[2C]{Kahn} and the references cited there).  The spectral sequence \eqref{equation motivic Hochschild-Serre} can be seen as the hypercohomology spectral sequence of the complex $\Z_Y(n)_{\text{\'et}}$.  Note that \'etale hypercohomology of the complex $\Z_Y(n)_{\text{\'et}}$ is Zariski hypercohomology of the complex $R\pi_*\Z_Y(n)_{\text{\'et}}$ of Zariski sheaves on $Y$.  Consider the exact triangle
\[
R\pi_*\Z_Y(n)_{\text{\'et}} \to R\pi_*\Q_Y(n)_{\text{\'et}} \to R\pi_*(\Q/\Z)_Y(n)_{\text{\'et}} \to R\pi_*\Z_Y(n)_{\text{\'et}}[1].
\]
The hypercohomology spectral sequence for $R\pi_*\Q_Y(n)_{\text{\'et}}$ converges since it is cohomologically bounded (since $\Q_Y(n) \simeq R\pi_* \pi^* \Q_Y(n) = R\pi_*\Q_Y(n)_{\text{\'et}}$ and since $Y$ has finite Zariski cohomological dimension). The hypercohomology spectral sequence for $(R\pi_*\Q/\Z)_Y(n)_{\text{\'et}}$ converges, the complex being bounded.  Consequently, the spectral sequence \eqref{equation motivic Hochschild-Serre} converges.

We are now set to give a proof of Theorem \ref{maintheorem}.  We will treat the cases when the base field $k$ is of characteristic $0$ and positive characteristic separately.

\subsection*{Proof of Theorem \ref{maintheorem}}

\begin{proof}[Proof in the case $\Char(k)=0$]
\hspace{1cm} 

We may assume that $E$ is a finite Galois field extension of $k$ with $G = \Gal(E/k)$.  Consider the Hochschild-Serre spectral sequence
\begin{equation}
\label{motivic Hochschild-Serre}
E_2^{r,s} = H^r (G, H^{s}_{\rm L} (Y_E, \Z(n))) \Rightarrow H^{r+s}_{\rm L} (Y, \Z(n)).
\end{equation}
Clearly, we have $E_2^{r,s} = 0$, if $r<0$.  From \eqref{equation identification with etale cohomology}, it follows that $H_{\rm L}^{s}(Y_E, \Q/\Z(n)) = H^s_{\text{\'et}}(Y_E, (\Q/\Z)_Y(n)) = 0$ for $s<0$.  Hence, the long exact sequence of hypercohomology associated to the exact triangle
\[
\Z_Y(n)_{\text{\'et}} \to \Q_Y(n)_{\text{\'et}} \to (\Q/\Z)_Y(n)_{\text{\'et}} \to \Z_Y(n)_{\text{\'et}}[1]
\]
shows that the group $H^{s}_{\rm L} (Y_E, \Z(n))$ is isomorphic to  $H^{s}_{\rm L} (Y_E, \Q(n))$ if $s<0$, and consequently a $\Q$-vector space in this case.  Therefore, we have $E_2^{r,s} = H^r(G, H^{s}_{\rm L}(Y_E, \Z(n))) = 0$, if $s<0$ and $r>0$.  This implies that the filtration on $H^{r+s}_{\rm L} (Y, \Z(n))$ induced by the spectral sequence \eqref{motivic Hochschild-Serre} is always finite.  

Since the spectral sequence \eqref{motivic Hochschild-Serre} is functorial and compatible with products, the action of an element of $\End_k(X)$ respects the filtration induced by the Hochschild-Serre spectral sequence. If $\alpha \in \End_k(X)$ is such that $\alpha_E = 0$, then $\alpha_E$ acts by the zero map on each of the motivic cohomology groups $H^{s}_{\rm L} (Y_E, \Z(n))$ and hence, on every term $E_2^{r,s}$ of \eqref{motivic Hochschild-Serre}.  Consequently, the action of $\alpha$ on the Lichtenbaum cohomology groups $H^{m}_{\rm L} (Y, \Z(n))$ is nilpotent.
\end{proof}

\begin{proof}[Proof in the case $\Char(k)= p > 0$]
\hspace{1cm} 

We have an exact triangle 
\[
\Z[1/p]_Y(n)_{\text{\'et}} \to \Q_Y(n)_{\text{\'et}} \to (\Q/\Z)'_Y(n)_{\text{\'et}} \to \Z[1/p]_Y(n)_{\text{\'et}}[1]
\]
in the derived category of \'etale sheaves on $Y$, where $(\Q/\Z)'_Y(n)_{\text{\'et}} = \underset{\ell \neq p}{\oplus}~ \Q_{\ell}/\Z_{\ell}(n)$, where $\ell$ runs through all the primes except $p$.  An argument analogous to the one used in the characteristic $0$ case shows that we have a convergent Hochschild-Serre spectral sequence after inverting the exponential characteristic
\begin{equation}
\label{motivic Hochschild-Serre char p}
E_2^{r,s} = H^r (G, H^{s}_{\rm L} (Y_E, \Z[1/p](n))) \Rightarrow H^{r+s}_{\rm L} (Y, \Z[1/p](n)),
\end{equation}
for a Galois extension $E/k$ with Galois group $G$.  One can now follow the proof in the characteristic $0$ case step-by-step to show that the action of $\alpha \in \End_k(X)$ with $\alpha_E = 0$ is nilpotent on the groups $H^m_{\rm L} (Y, \Z[1/p](n))$, for every $m,n \in \Z$.
\end{proof}

\begin{remark} 
\label{remark nilpotence index}
The proof shows that for every such correspondence $\alpha$ with $\alpha_E=0$ for a Galois extension $E$ of $k$, the index of nilpotence $N$ of the action of $\alpha$ on $H^{m}_{\rm L} (Y, \Z(n))$ can be taken to be ${\rm min}\{{\rm cd}(k), m\} + 1$, where ${\rm cd}(k)$ is the cohomological dimension of the field $k$.  In particular, the index of nilpotence does not depend on $\alpha$.
\end{remark}

\section{Applications to Rost nilpotence}
\label{section applications to Rost nilpotence}

In this section, we use Theorem \ref{maintheorem} and the Bloch-Kato conjecture 
(proved by Rost-Voevodsky \cite{Voevodsky-Bloch-Kato}) to obtain a reformulation of the Rost nilpotence principle.  We then use this to study Rost nilpotence for schemes of dimension $\leq 3$. 

Let $Y$ be a smooth projective scheme over a field $k$ of characteristic $0$. 
If $\sF$ is a sheaf on $Y_{\text{\'et}}$, we let $\sH^{p}_{\text{\'et}}(\sF)_Y$ the Zariski sheaf associated with the presheaf $U \mapsto H_{\text{\'et}}^p(U, \sF)$ on the Zariski site $Y_{\text{Zar}}$.  We will abuse notation and write $\sH^{p}_{\text{\'et}}(\sF)$ for $\sH^{p}_{\text{\'et}}(\sF)_Y$, whenever there is no confusion.  

Let $\pi: Y_{\text{\'et}}\rightarrow Y_{\text{Zar}}$ be the canonical morphism of sites.  In the derived category 
of complexes of Zariski sheaves on $Y$, we have $\Z_Y(d) \xrightarrow{\sim} 
\tau_{\leq d+1} R\pi_* \Z_Y(d)_{\text{\'et}}$ (\cite{Suslin-Voevodsky}, \cite[Theorem 6.6]{Voevodsky-Milnor-conjecture}, 
\cite{Geisser-Levine-Crelle}; also see \cite[2D]{Kahn}), and hence the distinguished triangle
\begin{equation}
\label{equation Voevodsky exact triangle}
\Z_Y(d) \to R \pi_* \Z_Y(d)_{\text{\'et}} \to \tau_{\geq d+2} R \pi_* \Z_Y(d)_{\text{\'et}} \to \Z_Y(d)[1].
\end{equation}
\noindent The associated long exact sequence of Zariski hypercohomology groups yields the exact sequence 
\begin{equation}
\label{equation exact sequence B-L}
\CH^d_{\rm L}(Y,1) \to \HH^{2d-1}(Y, \tau_{\geq d+2} R \pi_* \Z_Y(d)_{\text{\'et}}) \to \CH^d(Y) \to \CH^d_{\rm L}(Y). 
\end{equation}
The group $\HH^{2d-1}(Y, \tau_{\geq d+2} R \pi_* \Z_Y(d)_{\text{\'et}})$ is the abutment of the hypercohomology spectral sequence
\begin{equation}
\label{equation hypercohomology spectral sequence}
E_2^{p,q} = H^p(Y, R^q \tau_{\geq d+2} R \pi_* \Z_Y(d)_{\text{\'et}}) \Rightarrow 
\HH^{p+q}(Y, \tau_{\geq d+2} R \pi_* \Z_Y(d)_{\text{\'et}}).
\end{equation}
It follows from  the quasi-isomorphism $\Z_Y(d) \xrightarrow{\sim} \tau_{\leq d+1} R \pi_* \Z_Y(d)_{\text{\'et}}$ together with \cite[Corollaire 2.8]{Kahn}, that 
\begin{equation}
\label{equation cohomology of truncation}
R^q \tau_{\geq d+2} R \pi_* \Z_Y(d)_{\text{\'et}} = 
\begin{cases}
0, \quad \quad \quad \quad \quad \ \ \ \ \text{if $q \leq d+1$};\\
\sH^{q-1}_{\text{\'et}}(\Q/\Z(d)), ~ \text{ if $q \geq d+2$}.
\end{cases}
\end{equation}
In particular, the $E_2$-terms of (\ref{equation hypercohomology spectral sequence}) 
are either trivial, or can be identified with the cohomology groups 
$H^p_{\text{Zar}}(Y, \sH^{q-1}_{\text{\'et}}(\Q/\Z(d)))$.  This facilitates the study of schemes of dimension 
$\leq 3$, as far as Rost nilpotence is concerned.  We begin by observing that this yields a simple proof of 
Rost nilpotence for surfaces in characteristic $0$, generalizing \cite[Theorem 9]{Gille-Surfaces}.

\begin{theorem}
\label{theorem Rost nilpotence dim at most 2}
Let $X$ be a smooth, projective scheme of dimension $\leq 2$ over a field $k$ of characteristic $0$.  Suppose that $\alpha \in \End_k(X)$ is such that $\alpha_E=0$ for a Galois extension $E/k$. Then $\alpha^{\circ N} = 0$, for some positive integer $N$, that is, $\alpha$ is nilpotent as a correspondence.  
\end{theorem}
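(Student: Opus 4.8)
The plan is to compare the action of $\alpha$ on the Chow group $\CH^{\Dim X}(X \times_k X) = \End_k(X)$ with its action on the corresponding Lichtenbaum Chow group, exploiting that the comparison map $\gamma$ of \eqref{equation comparison map} is injective in codimension $\Dim X \leq 2$. Write $Y = X \times_k X$ and $n = \Dim X$, so that correspondences are classes in $\CH^n(Y)$. First I would prove that
\[
\gamma \colon \CH^n(Y) \longrightarrow \CH^n_{\rm L}(Y)
\]
is injective. Applying the construction of \eqref{equation Voevodsky exact triangle}--\eqref{equation exact sequence B-L} with the twist $n$, the kernel of $\gamma$ is a quotient of $\HH^{2n-1}(Y, \tau_{\geq n+2} R\pi_* \Z_Y(n)_{\text{\'et}})$. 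By \eqref{equation cohomology of truncation} this group is the abutment of the spectral sequence \eqref{equation hypercohomology spectral sequence}, whose terms in total degree $2n-1$ are the groups $E_2^{p,q} = H^p(Y, \sH^{q-1}_{\text{\'et}}(\Q/\Z(n)))$ with $p \geq 0$, $q \geq n+2$ and $p+q = 2n-1$. Such a term forces $2n-1 = p+q \geq n+2$, i.e. $n \geq 3$; since $n \leq 2$ there are none, so $\HH^{2n-1}(Y, \tau_{\geq n+2} R\pi_* \Z_Y(n)_{\text{\'et}}) = 0$ and $\gamma$ is injective.

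Next I would use that $\gamma$ is compatible with the action of correspondences. Because the cycle class map $\sigma$ is compatible with products and with projective pushforwards, the comparison map intertwines composition by $\alpha$ on $\CH^n(Y)$ with the action \eqref{End-action} of $\alpha$ on $\CH^n_{\rm L}(Y)$; concretely, $\gamma(\alpha^{\circ N} \circ \beta)$ equals the $N$-fold Lichtenbaum action of $\alpha$ on $\gamma(\beta)$, for every class $\beta$ and every $N \geq 1$.

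Finally I would invoke Theorem \ref{maintheorem}. As $\alpha_E = 0$, the action of $\alpha$ on the Lichtenbaum cohomology groups of $Y$, and in particular on $\CH^n_{\rm L}(Y)$, is nilpotent, say with $N$ as in Remark \ref{remark nilpotence index}. Applying this to the class $\gamma(\Delta_X)$ of the diagonal, which acts as the identity, and using the compatibility above, yields
\[
\gamma(\alpha^{\circ N}) = \gamma(\alpha^{\circ N} \circ \Delta_X) = 0.
\]
Injectivity of $\gamma$ from the first step then gives $\alpha^{\circ N} = 0$, so $\alpha$ is nilpotent as a correspondence.

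I expect the first step, the injectivity of $\gamma$ in codimension $n \leq 2$, to be the crux: it depends on the Beilinson--Lichtenbaum identification \eqref{equation Voevodsky exact triangle} (the Rost--Voevodsky theorem) for the twist $n$, which fixes the truncation degree $n+1$ and thereby produces the numerical vanishing. The hypothesis $\Dim X \leq 2$ enters precisely through the resulting inequality $n \leq 2$; in higher codimension the group $\HH^{2n-1}(Y, \tau_{\geq n+2} R\pi_* \Z_Y(n)_{\text{\'et}})$ need not vanish, and one would instead have to bound the action of $\alpha$ on it directly, which is where the argument becomes genuinely harder.
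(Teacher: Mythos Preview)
Your proposal is correct and follows essentially the same route as the paper: show that the comparison map $\gamma\colon \CH^n(Y) \to \CH^n_{\rm L}(Y)$ is injective for $n = \dim X \leq 2$ via the vanishing of $\HH^{2n-1}(Y, \tau_{\geq n+2} R\pi_* \Z_Y(n)_{\text{\'et}})$, and then invoke Theorem~\ref{maintheorem}. The only cosmetic differences are that the paper treats the cases $n=0,1,2$ separately (using $\Z(1)_{\text{\'et}} \simeq \G_m[-1]$ directly for $n=1$) rather than uniformly via your numerical inequality, and appends a standard reduction from arbitrary field extensions to the Galois case.
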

\begin{proof}
Set $d=\dim X$ and $Y= X \times_k X$.  We first assume that $E/k$ is 
a Galois extension.  The case $d=0$ is trivial, and the case $d=1$ follows from Theorem 
\ref{maintheorem}, since $\CH^1(Y) \cong \CH_{\rm L}^1(Y)$, because of the 
quasi-isomorphism $\Z(1)_{\text{\'et}} \simeq \G_m[-1]$.  If $d=2$, then we have from  
\eqref{equation hypercohomology spectral sequence} and \eqref{equation cohomology of truncation}, 
the vanishing 
\[
\HH^{3}(Y, \tau_{\geq 4} R \pi_* \Z_Y(2)_{\text{\'et}}) = 0.
\]
Hence, the canonical comparison map $\CH^2(Y) \rightarrow \CH_{\rm L}^2(Y)$ is injective and the 
claim now follows from Theorem \ref{maintheorem}.

By a standard argument, it suffices to consider this case. Explicitly, if $E/k$ is an arbitrary field extension, we can find a tower of field extensions $k \subset F \subset E$, where $F/k$ is purely transcendental and $E/F$ is algebraic.  Since $F/k$ is purely transcendental, the restriction map 
\[
\res_{F/k}:\End_k(X) \to \End_F(X_F)
\]
\noindent is an isomorphism (see \cite[Proposition 2.1.8]{Flenner-OCarrol-Vogel}, for example).  Thus, 
in order to show that $\alpha$ is nilpotent, it suffices to show that $\alpha_F \in \End_F(X_F)$ is 
nilpotent.  Since $\Char (k) = 0$, we can find a tower of fields $F \subset E \subset E'$ with $E'/F$ 
Galois.  Since $\alpha_E = 0$, we have $\alpha_{E'} =0$, hence $\alpha$ is nilpotent.  
\end{proof}

\begin{remark}
\label{remark Gille comparison}
The proof Theorem \ref{maintheorem} shows that the index of nilpotence $N$ in Theorem \ref{theorem Rost nilpotence dim at most 2} can be taken to be $\min \{ cd(k), 2\dim_k(X)\} +1$. This improves the bounds on the nilpotence exponent obtained by Gille \cite[2.5, Corollary and Remark 11]{Gille-Surfaces}.  
\end{remark}

We next consider the case of smooth projective threefolds over a field $k$ with $\Char(k)=0$.  Since $X$ is a threefold, we have the identification 
$$
\HH^{5}(Y, \tau_{\geq 5} R \pi_* \Z_Y(3)_{\text{\'et}}) = H^{0}_{\rm Zar}(Y, \sH^{4}_{\text{\'et}}(\Q/\Z(3)))=H^{4}_{\rm nr}(Y, \Q/\Z(3)),$$
where $H^{4}_{\rm nr}(Y, \Q/\Z(3))$ is the unramified cohomology group of $Y= X \times_k X$.  Thus by \eqref{equation exact sequence B-L}, we have an exact sequence
\begin{equation}
\label{3fold-exact}
\CH^3(Y,1) \to H_{\rm L}^5(Y, \Z(3)) \to  H^{4}_{\rm nr}(Y, \Q/\Z(3)) \to \CH^3(Y) \to \CH^3_{\rm L}(Y).
\end{equation}

We wish to study the action of $\End_k(X)$ on the exact sequence \eqref{3fold-exact}.  The motivic cohomology groups can be identified as
\[
H^m_{\rm M}(Y, \Z(n)) = \Hom_{{\rm DM}^{\rm eff}(k, \Z)}(\Z(X), \Z(n)[m]),
\]
where ${\rm DM}^{\rm eff}(k, \Z)$ is the triangulated category of effective motives in the sense of \cite{Voevodsky-Suslin-Friedlander}.  Note that we have a compatible action of any $\alpha \in \End_k(X)$ on the complexes $\Z(Y)$ and $R\pi_*\pi^*\Z(Y)$ defined by the same formula as in \eqref{End-action} as the maps involved are all defined at the level of complexes.  We therefore have an induced compatible action of $\alpha$ on the cone of the comparison map $\Z(Y) \to R \pi_*\pi^* \Z(Y)$.  The long exact sequence \eqref{equation exact sequence B-L} is obtained by applying $\Hom_{{\rm DM}^{\rm eff}(k, \Z)}(-, \Z(n)[m])$ to the exact triangle in $DM^{\rm eff}(k, \Z)$ arising from the above comparison map.  Consequently, we get a compatible  $\End_k(X)$-action on \eqref{3fold-exact}.  Along with the exact sequence (\ref{3fold-exact}) and Theorem \ref{maintheorem}, this immediately implies the following criterion.

\begin{lemma}
\label{lemma threefold reformulation}
Let $X$ be a smooth projective integral threefold over a field $k$ with $\Char(k)=0$ and set $Y = X\times_k X$.  Assume
$\alpha \in \End_k(X)$ is such that $\alpha_E=0$ for a Galois extension $E/k$.  Then the action of $\alpha$ is nilpotent on $H^{4}_{\rm nr}(Y, \Q/\Z(3))$ if and only if it is nilpotent on $\CH^3(Y)$.
\end{lemma}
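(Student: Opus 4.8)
The plan is to deduce the equivalence directly from the five-term exact sequence \eqref{3fold-exact}, combined with the nilpotence statement of Theorem \ref{maintheorem}, using only the elementary fact that nilpotence of an endomorphism propagates through short exact sequences. Recall from the construction preceding the lemma that every map in \eqref{3fold-exact} is $\End_k(X)$-equivariant, so $\alpha$ acts compatibly on the entire sequence. The two Lichtenbaum groups occurring in it, namely $H^5_{\rm L}(Y,\Z(3))$ and $\CH^3_{\rm L}(Y) = H^6_{\rm L}(Y,\Z(3))$, are Lichtenbaum cohomology groups of $Y = X \times_k X$; since $\Char(k)=0$, Theorem \ref{maintheorem} applies and guarantees that $\alpha$ acts nilpotently on both of them.

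The formal tool I would use is the following two-out-of-three property. If $0 \to M' \to M \to M'' \to 0$ is a short exact sequence of abelian groups carrying a compatible action of $\alpha$, then $\alpha$ is nilpotent on $M$ if and only if it is nilpotent on both $M'$ and $M''$. Indeed, a nilpotent action on $M$ restricts to the subgroup $M'$ and descends to the quotient $M''$; conversely, if $\alpha^{\circ a}$ annihilates $M''$ and $\alpha^{\circ b}$ annihilates $M'$, then for any $m \in M$ the image of $\alpha^{\circ a}(m)$ in $M''$ vanishes, so $\alpha^{\circ a}(m) \in M'$ and hence $\alpha^{\circ(a+b)}(m) = \alpha^{\circ b}(\alpha^{\circ a}(m)) = 0$.

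With these observations in place, the argument is a short diagram chase. Write $g$, $h$ and $\gamma$ for the last three maps in \eqref{3fold-exact}. The subgroup $\im(g) \subseteq H^4_{\rm nr}(Y,\Q/\Z(3))$ is an $\alpha$-stable quotient of $H^5_{\rm L}(Y,\Z(3))$, so $\alpha$ is nilpotent on it; by exactness $\im(g) = \ker(h)$, which yields the short exact sequence
\[
0 \to \im(g) \to H^4_{\rm nr}(Y,\Q/\Z(3)) \to \im(h) \to 0.
\]
The two-out-of-three property then gives that $\alpha$ is nilpotent on $H^4_{\rm nr}(Y,\Q/\Z(3))$ if and only if it is nilpotent on $\im(h)$. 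On the other side, $\im(h) = \ker(\gamma)$ fits into the short exact sequence
\[
0 \to \im(h) \to \CH^3(Y) \to \im(\gamma) \to 0,
\]
in which $\im(\gamma)$ is an $\alpha$-stable subgroup of $\CH^3_{\rm L}(Y)$, where $\alpha$ acts nilpotently; hence $\alpha$ is nilpotent on $\CH^3(Y)$ if and only if it is nilpotent on $\im(h)$. Comparing the two equivalences proves the claim.

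I expect no serious obstacle here: the entire content is supplied by Theorem \ref{maintheorem} (nilpotence on the two Lichtenbaum groups) together with the $\End_k(X)$-equivariance of \eqref{3fold-exact}. The only points requiring care are verifying that both $H^5_{\rm L}(Y,\Z(3))$ and $\CH^3_{\rm L}(Y)$ genuinely lie within the scope of Theorem \ref{maintheorem}, and that the intermediate groups $\im(g)$, $\im(h)$ and $\im(\gamma)$ are $\alpha$-stable; both are immediate from the compatibility of the action with the maps in \eqref{3fold-exact}.
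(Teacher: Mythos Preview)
Your proposal is correct and follows exactly the approach the paper intends: the paper states that the lemma follows immediately from the exact sequence \eqref{3fold-exact}, the compatible $\End_k(X)$-action on it, and Theorem~\ref{maintheorem}, and your argument simply spells out this implication via the two-out-of-three property for nilpotence along short exact sequences.
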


Thus, if $\Char (k)=0$, then the argument in the proof of Theorem \ref{theorem Rost nilpotence dim at most 2} shows that Rost nilpotence holds for $X$ if and only if the criterion in Lemma \ref{lemma threefold reformulation} is satisfied.

\begin{remark}
\label{3fold-rational} 
It is easy to see that the criterion from Lemma \ref{lemma threefold reformulation} applies to rational threefolds. Indeed, we may assume that $E/k$ is finite. Since $X$ is rational, so is $Y = X \times_k X$, and 
$H^{4}_{\rm nr}(Y, \Q/\Z(3)) \simeq H^{4}_{\text{\'et}}(k, \Q/\Z(3)))$.  Since $\alpha_E = 0$, the 
action of $\alpha$ on $$H^{q}_{\text{\'et}}(E, \Q/\Z(3))) \simeq 
H^{0}_{\rm Zar}(Y_E, \sH^{q}_{\text{\'et}}(\Q/\Z(3)))$$ is trivial, and the claim follows now from 
the Hochschild-Serre spectral sequence
\[
H^p(\Gal(E/k), H^{q}_{\text{\'et}}(E, \Q/\Z(3)))) \Rightarrow H^{p+q}_{\text{\'et}}(k, \Q/\Z(3))).
\]
In particular, if $\Char (k) = 0$, then Rost nilpotence holds for $X$.
\end{remark}

We now show how Lemma \ref{lemma threefold reformulation} can be used to prove Rost nilpotence for birationally ruled threefolds over a field of characteristic $0$.  Recall that a threefold is said to be birationally ruled if it is birational to $S \times \P^1$, where $S$ is a surface.  To this end, we show first that the Rost nilpotence principle is a birational invariant property of threefolds.  

\begin{lemma}
\label{lemma threefolds birational invariant}
Let $X$ be a smooth projective threefold over an arbitrary field $k$ and let $\phi:\~X \to X$ 
be the blow-up of $X$ with smooth center $C$.  Then $X$ satisfies the Rost nilpotence principle if and only if $\~X$ does.
\end{lemma}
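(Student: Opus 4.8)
The plan is to reduce everything to Manin's blow-up formula for Chow motives together with the compatibility of Rost nilpotence with Tate twists and finite direct sum decompositions. Since $\~X$ is the blow-up of the smooth threefold $X$ along the smooth center $C$, Manin's identity \cite{Manin} gives a decomposition in $\Chow(k)$ of the form
\[
h(\~X) \cong h(X) \oplus P, \qquad P = \bigoplus_{j,\,i\geq 1} h(C_j)(i),
\]
where the $C_j$ are the connected components of $C$ and the twists range over $1 \leq i \leq \codim_X C_j - 1$. Since a center of dimension $2$ would be a divisor (for which $\phi$ is an isomorphism and there is nothing to prove), each $C_j$ is smooth projective of dimension $\leq 1$. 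For such schemes the Rost nilpotence principle is classical, and moreover the kernel ideal $\ker(\End_k(C_j) \to \End_E((C_j)_E))$ is not merely nil but \emph{nilpotent of an index independent of the correspondence} (compare Remark \ref{remark nilpotence index} and Theorem \ref{theorem Rost nilpotence dim at most 2}). I would record this uniform nilpotence as a strong form of Rost nilpotence, observe it is preserved by Tate twists and finite direct sums, and conclude that $\ker(\End_k(P) \to \End_E(P_E))$ is nilpotent of some uniform index $q$.

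I would then prove the two implications using the orthogonal idempotents $e_1, e_2 \in \End_k(\~X)$ cutting out $h(X)$ and $P$. The implication ``$\~X \Rightarrow X$'' is the easy one: $h(X)$ is a direct summand of $h(\~X)$, so writing $\iota,\pi$ for the structural inclusion and projection with $\pi\iota = \mathrm{id}_{h(X)}$, any $\beta \in \End_k(X)$ with $\beta_E = 0$ lifts to $\iota\beta\pi \in \End_k(\~X)$ with $(\iota\beta\pi)_E = 0$; from $(\iota\beta\pi)^{\circ N} = \iota\,\beta^{\circ N}\pi$ one reads off that nilpotence of $\iota\beta\pi$ forces $\beta^{\circ N}=0$. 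This direction uses only that $\~X$ satisfies Rost nilpotence in the pointwise (nil) sense.

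For the converse ``$X \Rightarrow \tilde X$'' — the substantial direction — let $\alpha \in \End_k(\~X)$ with $\alpha_E=0$, set $I = \ker(\End_k(\~X)\to\End_E(\~X_E))$, and write the block entries $a=e_1\alpha e_1$, $b=e_1\alpha e_2$, $c=e_2\alpha e_1$, $d=e_2\alpha e_2$. By hypothesis $X$ satisfies Rost nilpotence, so $a$ is nilpotent, while the full ideal $e_2 I e_2 = \ker(\End_k(P)\to\End_E(P_E))$ is nilpotent of index $q$. Expanding $\alpha^{\circ n}$ as a sum over ``block walks'' on the two summands, I would argue that any walk returning to the summand $P$ at least $q$ times contributes an element of $(e_2 I e_2)^{q}=0$, whereas a run inside $h(X)$ longer than the nilpotence index of $a$ contributes a factor $a^{\circ(\text{large})}=0$; a pigeonhole count on the length $n$ then kills every block of $\alpha^{\circ n}$ simultaneously, so $\alpha$ is nilpotent.

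The hard part is precisely this last step. Rost nilpotence for a single motive only yields that the diagonal kernel ideals are nil, and in a noncommutative endomorphism ring the off-diagonal correspondences $b,c$ cannot be controlled from pointwise nilpotence alone — a finitely generated nil subring need not be nilpotent. What saves the argument is the \emph{asymmetry} of the decomposition: one needs the kernel ideal only of the low-dimensional summand $P$ to be nilpotent with a uniform index, which is available because $C$ has dimension $\leq 1$, while the threefold summand $h(X)$ may be handled with the weaker nil hypothesis coming from the assumption on $X$. I therefore expect the two delicate points to be the verification of uniform nilpotence for centers of dimension $\leq 1$ over an arbitrary base field, and the bookkeeping of the block-walk expansion that converts this uniform bound into nilpotence of $\alpha$.
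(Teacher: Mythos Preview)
Your overall architecture---Manin's blow-up formula, control of the low-dimensional summand $P$, and a direct-sum argument---is exactly the route the paper takes. The differences are in how you handle the two ingredients.

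First, the paper does not settle for ``uniform nilpotence'' of the kernel on the summand $P$: it observes the sharper fact that $\End_k(h(C)) \to \End_E(h(C_E))$ is \emph{injective} whenever $\dim C \leq 1$. Indeed, all the Hom-groups occurring in $\End_k(P)$ are $\CH^0$ or $\CH^1$ of smooth projective schemes, and these inject under arbitrary field extension. This is elementary and characteristic-free. Your justification, by contrast, appeals to Remark~\ref{remark nilpotence index} and Theorem~\ref{theorem Rost nilpotence dim at most 2}, both of which are established only in characteristic~$0$; since the lemma is stated over an arbitrary field, this is a genuine gap. The fix is simply to note that the kernel is zero, i.e.\ your $q$ may be taken equal to $1$.

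Second, in place of your explicit block-walk/pigeonhole expansion, the paper invokes \cite[Lemma, p.~6]{Gille-geometrically-rational-surfaces}, whose hypothesis is precisely the injectivity just noted. Your combinatorial argument is correct (and in fact proves a mild generalization of Gille's lemma, allowing the kernel on one summand to be uniformly nilpotent rather than zero), so nothing is wrong there; it is just that the paper outsources this step. Once $q=1$, your pigeonhole collapses to the observation that any nonzero block word in $\alpha^{\circ n}$ contains a factor $a^{\circ m}$ with $m$ growing linearly in $n$, so nilpotence of $a$ alone suffices---there is no need to worry separately about $bc$ or higher words, which had been your stated concern.
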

\begin{proof}
If $C$ has pure codimension $r$ in $X$, the motive of $\~X$ is given by the formula 
\[
h(\~X) \simeq h(X) \oplus \left( \underset{i=1}{\overset{r-1} \oplus} h(C)(i) \right),
\]
see 
\cite[Section 9]{Manin}.
Since $\Dim X =3$, we only have to consider the cases when $\Dim C$ is $0$ or $1$.  When $\Dim C \leq 1$, it is easy to see that $\End_k(h(C))$ injects into $\End_E(h(C_E))$, for every field extension $E/k$.  Hence, we can apply \cite[Lemma, page 6]{Gille-geometrically-rational-surfaces} to complete the proof.
\end{proof}

\begin{theorem}
\label{theorem ruled threefold}
Let $X$ be a smooth projective birationally ruled threefold over a field $k$ of characteristic $0$.  Then $X$ satisfies the Rost nilpotence principle.
\end{theorem}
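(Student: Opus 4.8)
The plan is to use the weak factorization theorem to reduce to the model $X = S \times \P^1$, and then to feed the surface case into the reformulation of Lemma~\ref{lemma threefold reformulation}. Since $X$ is birationally ruled it is birational to $S_0 \times \P^1$ for a surface $S_0$, and resolving singularities (here $\Char(k)=0$ is used) we may take $S_0$ to be a smooth projective integral surface $S$. Thus $X$ is birational to the smooth projective integral threefold $S \times \P^1$, and by the weak factorization theorem \cite{AKMW} the birational map $X \dashrightarrow S \times \P^1$ factors as a finite chain of blow-ups and blow-downs with smooth centres, passing only through smooth projective threefolds (so that every centre has dimension $0$ or $1$). By Lemma~\ref{lemma threefolds birational invariant} each blow-up in this chain preserves the Rost nilpotence principle, so it suffices to prove the theorem for $X = S \times \P^1$.

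So fix $\alpha \in \End_k(S \times \P^1)$ with $\alpha_E = 0$; arguing exactly as at the end of the proof of Theorem~\ref{theorem Rost nilpotence dim at most 2} we may assume $E/k$ is finite Galois. Write $Y = (S \times \P^1)^2$. By Lemma~\ref{lemma threefold reformulation} it suffices to show that the action of $\alpha$ on $H^{4}_{\rm nr}(Y, \Q/\Z(3))$ is nilpotent. The next step is to simplify this group: unramified cohomology is a stable birational invariant and is insensitive to $\P^1$-bundles, and $Y \cong S \times S \times \P^1 \times \P^1$, so
\[
H^{4}_{\rm nr}(Y, \Q/\Z(3)) \;\cong\; H^{4}_{\rm nr}(S \times S, \Q/\Z(3)).
\]
This is the analogue of the collapse used in Remark~\ref{3fold-rational}, except that for a rational $X$ the group drops all the way to the Galois cohomology $H^{4}_{\text{\'et}}(k, \Q/\Z(3))$, whereas here it drops only to the unramified cohomology of the surface square $S \times S$.

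It therefore remains to prove that $\alpha$ acts nilpotently on $H^{4}_{\rm nr}(S \times S, \Q/\Z(3))$. The guiding idea is that, since $\alpha_E = 0$, the cycle class $\sigma(\alpha_E)$ vanishes and hence $\alpha_E$ acts as the zero endomorphism on the base-changed group $H^{4}_{\rm nr}((S \times S)_E, \Q/\Z(3))$; a Hochschild--Serre / Galois-descent argument in the spirit of Remark~\ref{3fold-rational} should then upgrade this vanishing over $E$ to nilpotence of the action over $k$, with a uniform index controlled as in Remark~\ref{remark nilpotence index}. As an independent check on this step, one can instead use the motivic decomposition $h(S \times \P^1) \simeq h(S) \oplus h(S)(1)$ to present $\alpha$ as a $2 \times 2$ matrix of correspondences between $h(S)$ and its Tate twist: the restriction kernel inside $\End_k(S)$ is, by the surface case (Theorem~\ref{theorem Rost nilpotence dim at most 2}) together with the uniform filtration bound of Remark~\ref{remark nilpotence index}, a genuinely nilpotent ideal rather than merely a nil one, and every off-diagonal product of matrix entries again lands in this ideal, so the whole matrix ideal is nilpotent and $\alpha$ is nilpotent.

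The main obstacle is precisely the descent step for $H^{4}_{\rm nr}(S \times S, \Q/\Z(3))$. Unlike in the rational case, this group is an $E_2$-term of the Bloch--Ogus spectral sequence rather than a subquotient of the \'etale cohomology of $Y$, so the clean Hochschild--Serre mechanism of Theorem~\ref{maintheorem} and Remark~\ref{3fold-rational} does not transport verbatim; the technical heart of the argument is to control $H^{4}_{\rm nr}(S \times S)$ under the base change $k \to E$ and to descend the vanishing of the $E$-action. By contrast, the weak factorization reduction and the identification of the unramified group are routine given the preceding results.
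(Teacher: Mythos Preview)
Your skeleton is the paper's: reduce by weak factorization and Lemma~\ref{lemma threefolds birational invariant} to $X=S\times_k\P^1$, invoke Lemma~\ref{lemma threefold reformulation}, and then aim a Hochschild--Serre argument at $H^4_{\rm nr}(Y,\Q/\Z(3))$ for $Y=X\times_kX$. But you stop at the hard step, and what you have written is not yet a proof.

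The idea you are missing is the one that dissolves your ``main obstacle''. You reduce $H^4_{\rm nr}(Y,\Q/\Z(3))$ only to $H^4_{\rm nr}(S\times_kS,\Q/\Z(3))$ and then, correctly, cannot see how to run Hochschild--Serre on an unramified group. The paper goes one step further and identifies $H^4_{\rm nr}(Y,\Q/\Z(3))$ with the \emph{full} \'etale cohomology of the function field, $H^4_{\text{\'et}}(k(S\times_kS),\Q/\Z(3))$, using that $k(Y)$ is purely transcendental of degree~$2$ over $k(S\times_kS)$ and citing \cite[Proposition~5.1]{Blinstein-Merkurjev}. Once one is computing Galois cohomology of a field, the standard Hochschild--Serre spectral sequence
\[
H^p\bigl(\Gal(E/k),\,H^q_{\text{\'et}}(E(S\times_kS),\Q/\Z(3))\bigr)\ \Longrightarrow\ H^{p+q}_{\text{\'et}}(k(S\times_kS),\Q/\Z(3))
\]
applies with no Bloch--Ogus complications; the same identification over $E$ rewrites each $E_2$-term as $H^p(\Gal(E/k),H^q_{\rm nr}(Y_E,\Q/\Z(3)))$, on which $\alpha_E=0$ acts by zero, and nilpotence on the abutment follows exactly as in Remark~\ref{3fold-rational}. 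Your alternative route via $h(S\times\P^1)\simeq h(S)\oplus h(S)(1)$ does not sidestep the problem either: the off-diagonal matrix entries live in $\CH^1(S\times_kS)$ and $\CH^3(S\times_kS)$, and while the uniform filtration bound of Remark~\ref{remark nilpotence index} does force long products of kernel elements to vanish in $\CH^*_{\rm L}(S\times_kS)$, the comparison map $\CH^3(S\times_kS)\to\CH^3_{\rm L}(S\times_kS)$ on the fourfold $S\times_kS$ need not be injective---its kernel is controlled precisely by $H^4_{\rm nr}(S\times_kS,\Q/\Z(3))$---so you land back on the very obstruction you were trying to avoid.
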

\begin{proof}
Let $X=S \times_k \P^1$, where $S$ is a surface. We show first that $X$ satisfies the Rost nilpotence principle.
Assume $\alpha \in \End_k(X)$ is such that $\alpha_E = 0$ for a finite Galois extension $E/k$. 
By Lemma \ref{lemma threefold reformulation} it suffices to show the action of $\alpha$ on 
$H^4_{\rm nr}(X\times_k X,\Q/\Z(3))$ is nilpotent. Since $k(X \times_k X)$ is a purely transcendental extension of $k(S \times_k S)$ of transcendence degree $2$, we have (by
\cite[Proposition 5.1]{Blinstein-Merkurjev}, for example) 
$
H^{4}_{\rm nr}(X \times_k X, \Q/\Z(3)) = H^{4}_{\text{\'et}}(k(S \times_k S), \Q/\Z(3)).
$
The group on the right hand side is the abutment of the spectral sequence
\[
E_2^{p,q} = H^p(\Gal(E/k), H^{q}_{\text{\'et}}(E(S \times_k S), \Q/\Z(3))) 
\Rightarrow H^{p+q}_{\text{\'et}}(k(S \times_k S), \Q/\Z(3))).
\]
\noindent 
For every $q$, we have $H^{q}_{\text{\'et}}(E(S \times_k S), \Q/\Z(3)) 
= H^{q}_{\rm nr}((X \times_k X)_E, \Q/\Z(3))$.  Since $\alpha_E = 0$, it follows that the action of $\alpha$ on $H^{q}_{\rm nr}((X \times_k X)_E, \Q/\Z(3))$ is zero.  Hence, the action of $\alpha$ on every $E_2^{p,q}$-term of the above spectral sequence is zero.  Consequently, the action of $\alpha$ on $H^{4}_{\rm nr}(X \times_k X, \Q/\Z(3))$ is nilpotent. If $E/k$ is an arbitrary field extension, the claim follows from an argument similar to the one in the proof of Theorem \ref{theorem Rost nilpotence dim at most 2}.

Now, let $X$ be a smooth projective birationally ruled threefold over $k$.  There exists a smooth 
projective surface $S$ such that $X$ is birational to $S \times_k \P^1$.  Since $\Char (k) = 0$, by 
\cite[Theorem 0.1.1]{AKMW} there exists a sequence of smooth projective varieties 
$Z_1, \ldots, Z_n, X_1, \ldots, X_n$ and a diagram
\[
\xymatrixcolsep{1.3pc}
\xymatrix{
  & Z_1 \ar[dl] \ar[dr] &     & Z_2 \ar[dl] \ar[dr] &&&&   Z_n \ar[dl] \ar[dr] & & \\
X &     & X_1 &     & {X_2} & \cdots          & X_n & & S \times_k \P^1}
\]
\noindent in which every morphism is a blow-up with a smooth center.  Applying Lemma 
\ref{lemma threefolds birational invariant}, we conclude that $X$ satisfies the Rost nilpotence principle.
\end{proof}

\begin{remark}
The discussion preceding Theorem \ref{theorem Rost nilpotence dim at most 2} suggests an approach to 
prove Rost nilpotence for schemes $X$ of dimension $d \geq 3$, which we briefly 
outline.  Assume $\alpha \in \End_k(X)$ is such that $\alpha_E=0$ for a Galois extension $E/k$.  
One may attempt to prove Rost nilpotence for $X$ by showing that for all 
$d+1 \leq p \leq 2d-2$ the action of the correspondence $\alpha$ is nilpotent on the cohomology groups $H^{2d-2-p}_{\rm Zar}(X \times_k X, \sH^{p}_{\text{\'et}}(\Q/\Z(d)))$ 
to obtain by \eqref{equation hypercohomology spectral sequence} a nilpotent action of $\alpha$ on the hypercohomology group 
$\HH^{2d-1}(X \times_k X, \tau_{\geq d+2} R \pi_* \Z(d)_{\text{\'et}})$, compatible with the action of $\alpha$ on $\CH^d_{\rm L}(X \times_k X,1)$ and $\CH^d(X \times_k X)$.  In view of Theorem \ref{maintheorem}, this would imply that the action of $\alpha$ on $\CH^d(X \times_k X)$ is nilpotent.
\end{remark}

\end{document}